\newtheorem{theorem}{Theorem}[section]
\newtheorem{lemma}[theorem]{Lemma}
\newtheorem{corollary}[theorem]{Corollary}
\newtheorem*{remark*}{Remark}
\Crefname{conjecture}{Conjecture}{Conjectures}
\theoremstyle{plain}
\theoremstyle{plain}
\newcommand{\N}{\mathbb{N}}
\newcommand{\Z}{\mathbb{Z}}
\newcommand{{\D}}{\delta}
\newcommand{\bbH}{\mathbb{H}}
\numberwithin{equation}{section}
\numberwithin{table}{section}
\author{Michael H. Mertens, Larry Rolen}
\address{Department of Mathematics and Computer Science, Emory University, 400 Dowman Drive, 30322 Atlanta, GA } 
\email{michael.mertens@emory.edu}
\address{Mathematics Institute
University of Cologne,
Gyrhofstr. 8b
50931 Cologne}
\email{lrolen@math.uni-koeln.de}
\title{Lacunary recurrences for Eisenstein series}
\thanks{
The second author thanks the University of Cologne and the DFG for their generous support via the University of Cologne postdoc grant DFG Grant D-72133-G-403-151001011, funded under the Institutional Strategy of the University of Cologne within the German Excellence Initiative.}
\begin{document}
\begin{abstract}
Using results from the theory of modular forms, we reprove and extend a result of Romik about lacunary recurrence relations for Eisenstein series.
\end{abstract}
\maketitle

\section{Introduction}
It is a classical result from the theory of elliptic functions that the Eisenstein series 
\begin{equation}
\label{GkDef}
G_k(\tau):=\sum\limits_{(m,n)\in\Z^2\setminus\{(0,0)\}}(m\tau+n)^{-k}=2\zeta(k)\left(1-\frac{2k}{B_k}\sum\limits_{n=1}^\infty \sum\limits_{d|n}d^{k-1}e^{2\pi i\tau}\right),
\end{equation}
where $k\geq 2$ is an even integer\footnote{Note that for $k=2$, we have to fix a certain order of summation to ensure convergence of the defining double series and that the Fourier expansion given is still valid.}, $\tau$ is an element of the complex upper half-plane $\bbH$, and $B_k$ denotes the $k$th Bernoulli number, satisfy the following recurrence relation
\begin{equation}\label{Hurwitz}
(n-3)(2n-1)(2n+1)G_{2n}=3\sum\limits_{\substack{p,q\geq 2 \\ p+q=n}}(2p-1)(2q-1)G_{2p}G_{2q}.
\end{equation}

From the analytic properties of the zeta function $\omega(s)$, which is basically a special case of a Witten zeta function and is essentially a Dirichlet series generating function of the dimensions of irreducible representations of $\operatorname{SU}(3)$, Romik \cite{Romik} derived a new recurrence relation, given by
\begin{equation}
\label{Romik}
G_{6n+2}=\frac{1}{6n+1}\cdot\frac{(4n+1)!}{(2n)!^2}\sum\limits_{k=1}^n \frac{\binom{2n}{2k-1}}{\binom{6n}{2n+2k-1}}G_{2n+2k}G_{4n-2k+2}.
\end{equation}
The most striking difference between the recurrences \eqref{Hurwitz} and \eqref{Romik} is that in \eqref{Romik}, only about a third of the previous Eisenstein series are needed, while in \eqref{Hurwitz}, all Eisenstein series occur.

In the end of \cite{Romik}, Romik asked for a direct proof of \eqref{Romik} using the theory of modular forms. Here, we provide such a proof, and in particular show that Romik's example is a natural, and especially symmetric, instance of general relations among products of two Eisenstein series which have been classified in \cite{HST} and which are closely related to the theory of period polynomials (see \cite{Popa,Zagier}). In particular, we show the following. 
\begin{theorem}\label{theo6n2}
For all $n\in\N$, \eqref{Romik} holds.
\end{theorem}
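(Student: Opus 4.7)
The plan is to read Romik's formula \eqref{Romik} as a specific member of the family of linear relations among products $G_a(\z)G_{k-a}(\z)$ of two holomorphic Eisenstein series in a fixed even weight $k=6n+2$. By the results of \cite{HST} (compare also the period-polynomial perspective of \cite{Popa,Zagier}), every polynomial $P(X)$ of degree $k-2$ satisfying the standard period-polynomial cocycle relations gives rise to an identity of the shape
$$
\sum_{\substack{a+b=k\\ a,b\geq 2\text{ even}}} c_{a,b}(P)\,G_a(\z)G_b(\z) \;\equiv\; \lambda(P)\,G_k(\z) \pmod{S_k},
$$
where the coefficients $c_{a,b}(P)$ are obtained by reading off appropriate coefficients of $P$, and $\lambda(P)$ is determined by an explicit functional applied to $P$. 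The reduction of Theorem \ref{theo6n2} to this framework is therefore to produce the right $P=P_n$ and then verify that the corresponding cusp-form contribution vanishes.

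For the first step I would be guided by the very symmetric shape of Romik's coefficients. The ratio $\binom{2n}{2k-1}/\binom{6n}{2n+2k-1}$ is, up to normalization, what one obtains when expanding a polynomial of the form
$$P_n(X) \;=\; X^{2n}(1-X)^{2n}$$
(or an affine translate thereof) and pairing with the monomials $X^{2n+2k-1}$ via the natural Beta-function pairing underlying the HST setup. This guess also explains the lacunarity of \eqref{Romik}: since $P_n$ vanishes to order $2n$ at both $X=0$ and $X=1$, the coefficients $c_{a,b}(P_n)$ vanish for all $a,b$ outside the central window $\{2n+2,\dots,4n\}$, which is exactly the range appearing on the right-hand side of \eqref{Romik}. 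Once $P_n$ is chosen, computing $\lambda(P_n)$ is a Beta-integral that reproduces the overall prefactor $\tfrac{1}{6n+1}\tfrac{(4n+1)!}{(2n)!^2}$.

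The main obstacle will be the second step: showing that the $P_n$-relation is \emph{pure}, i.e., that no cusp form appears on the right, so that the displayed congruence actually holds in $M_k$. I expect this to follow from the double symmetry $P_n(X)=P_n(1-X)=P_n(-X)$ (after the natural affine shift), which forces $P_n$ to lie in the ``Eisenstein part'' of the space of period polynomials; under Eichler--Shimura this part corresponds precisely to the span of $G_k$, killing the projection onto the space of cuspidal period polynomials and hence, via \cite{HST}, the projection of our relation onto $S_k$. The technical heart of the argument is this reduction from the symmetry of $P_n$ to the vanishing of the cuspidal contribution; once it is in place, \eqref{Romik} follows by matching the single remaining constant, which is a routine Gamma-function computation.
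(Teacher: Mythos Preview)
Your instinct to pass through \cite{HST} is exactly right, and your candidate polynomial $P_n(X)=X^{2n}(1-X)^{2n}$ is the one that underlies the paper's argument: in the parametrization of Theorem~2.1 it corresponds precisely to the symmetric choice $r=s=t=2n+1$ with $k=6n+2$. Where your outline diverges from the paper is in how you handle the cusp-form contribution, and this is where the gap lies.

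The paper does not work modulo $S_k$ at all. Theorem~2.1 (the HST identity) is an \emph{exact} equality $0=\sum(\cdots)$, not a congruence; specializing it to $r=s=t=2n+1$ immediately gives Corollary~2.2 as an identity in $M_{6n+2}$, with no cuspidal term to kill. The rest is elementary: a Hagen--Rothe evaluation shows that the coefficient of $G_{6n+2}$ collapses to $\binom{6n+1}{2n}$, after which the match with Romik's prefactor is a one-line manipulation of factorials. So the step you flag as the ``main obstacle'' simply does not arise in the paper's route.

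Your proposed mechanism for removing that obstacle is also not convincing as stated. After the affine shift the ``double symmetry'' $P_n(1-X)=P_n(X)$ and $P_n(-X)=P_n(X)$ collapse to the single statement that the shifted polynomial is even; but the even part of the period-polynomial space has dimension $\dim S_k+1$, not $1$, so evenness alone cannot force $P_n$ into the Eisenstein line. In particular $X^{2n}(1-X)^{2n}$ is not a scalar multiple of the Eisenstein period polynomial $X^{k-2}-1$, so the Eichler--Shimura heuristic you invoke does not, by itself, annihilate the projection to $S_k$. The clean fix is to quote the HST relation in its exact form (as the paper does) rather than the weaker $\pmod{S_k}$ version; then your Beta-integral computation of the normalizing constant plays the same role as the paper's Hagen--Rothe step.
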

As further examples of such identities, we record two additional lacunary recurrences, i.e. recurrences which use relatively few terms, for the Eisenstein series $G_{6n}$ and $G_{6n+4}$, which we give in the following two theorems. 
\begin{theorem}\label{theo6n}
For all $n\geq2$, we have
\begin{align*}
&\binom{6n+1}{2n}G_{6n}\\
=&\sum\limits_{k=1}^{n}\left[\binom{2n+2k-1}{2n}\binom{4n-2k-1}{2n}+2 \binom{2n+2k-1}{2n}\binom{4n-2k-1}{2n-2}\right]G_{2n+2k}G_{4n-2k}.
\end{align*}
\end{theorem}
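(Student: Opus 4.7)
The plan is to follow the same general strategy outlined for Theorem~\ref{theo6n2}, namely to produce a specific linear combination of products $G_{2a} G_{2b}$ of total weight $6n$ whose cusp form contribution vanishes, leaving a scalar multiple of $G_{6n}$. The framework governing such relations is developed in \cite{HST} (and in closely related forms in \cite{Popa, Zagier}), where congruences among products of two Eisenstein series modulo cusp forms are parametrized by period polynomials.

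Concretely, the first step is to recall or derive an explicit formula of the shape
\[
G_{2a} G_{2b} \equiv \lambda_{a,b} \, G_{2a+2b} \pmod{S_{2a+2b}(\SLZ)},
\]
with $\lambda_{a,b}$ determined by comparing constant terms (cusp forms vanish at the cusp, so matching $q^0$-coefficients pins down the scalar). With $a+b = 3n$, this writes every such product as an explicit rational multiple of $G_{6n}$ modulo $S_{6n}(\SLZ)$.

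The second step is to isolate the period polynomial of weight $6n-2$ whose associated linear functional on products of Eisenstein series produces simultaneously (i) the binomial weights appearing in the theorem and (ii) the vanishing of the cusp form contribution, i.e.\ the cancellation of all period-polynomial pairings against cusp forms of weight $6n$. The symmetry $G_{2a}G_{2b} = G_{2b}G_{2a}$ already folds the summation in half; obtaining the full lacunary structure (support on $2n+2 \le 2a \le 4n$) requires additional algebraic vanishing, coming from the fact that $G_{6n}$ itself realizes a modular form of the relevant weight whose period polynomial is known explicitly. I anticipate that the correct choice parallels closely the polynomial used in the proof of Theorem~\ref{theo6n2}, differing chiefly in parity and weight bookkeeping — in particular, the appearance of the additional factor $2\binom{2n+2k-1}{2n}\binom{4n-2k-1}{2n-2}$ alongside $\binom{2n+2k-1}{2n}\binom{4n-2k-1}{2n}$ suggests a symmetrization of the Romik polynomial together with a correction term arising from the different position of $6n$ relative to the shift $6n+2$.

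The main obstacle is step two: identifying the correct period polynomial and verifying that it annihilates $S_{6n}(\SLZ)$. Once the polynomial is in hand, the remaining verification reduces to a binomial coefficient identity — matching the coefficient of $G_{6n}$ on both sides, which should collapse to $\binom{6n+1}{2n}$ after summing the explicit $\lambda_{a,b}$ against the chosen weights. I expect this final step to succumb to standard hypergeometric or binomial manipulations of the same flavour as those needed to complete the proof of Theorem~\ref{theo6n2}.
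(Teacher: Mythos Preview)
What you have written is a plan, not a proof: you explicitly flag ``the main obstacle is step two: identifying the correct period polynomial and verifying that it annihilates $S_{6n}(\SLZ)$'' and then do not carry it out. Everything hinges on that identification, so as it stands there is a genuine gap. Moreover, your framing in terms of congruences modulo $S_{6n}(\SLZ)$ is weaker than what is needed: the theorem asserts an \emph{exact} identity of modular forms, so at some point you must argue that your chosen combination of $G_{2a}G_{2b}$'s has zero cusp-form part on the nose, not merely that the Eisenstein parts match.

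The paper bypasses the period-polynomial search entirely by working directly with Theorem~2.1 of \cite{HST} (which already packages the exact relations). One simply specializes \eqref{Relations} with $(r,s,t)=(2n-1,2n+1,2n+1)$, so $k=6n$. The three sums in \eqref{Relations} then collapse: the second and third agree after reversing the order of summation, and the binomial factors vanish outside the ranges $2n+1\le i\le 4n-1$ and $2n-1\le j\le 4n-1$. After the index shift this produces precisely the right-hand side of the theorem (the two bracketed terms come from the first sum and from twice the second), together with a coefficient
\[
\sum_{j=1}^{2n-1}\binom{2n+j-1}{2n}\binom{4n-j-1}{2n}
+2\sum_{j=1}^{2n+1}\binom{2n+j-3}{2n-2}\binom{4n-j+1}{2n}
\]
in front of $G_{6n}$. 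Two applications of the Hagen--Rothe identity \eqref{binom} evaluate these sums as $\binom{6n-1}{2n-2}$ and $\binom{6n-1}{2n}$, and a one-line computation gives $\binom{6n-1}{2n-2}+2\binom{6n-1}{2n}=\binom{6n+1}{2n}$. No separate cusp-form argument is needed, since \eqref{Relations} is already an exact identity. If you want to salvage your approach, the period polynomial you are looking for is exactly the one encoded by this choice of $(r,s,t)$; but it is far simpler to quote \eqref{Relations} directly.
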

\begin{theorem}\label{theo6n4}
For all $n\in\N$, we have
\begin{align*}
&\left\{\binom{6n+3}{2n+2}+2\binom{6n+3}{2n}\right\}G_{6n+4}\\
=&\sum\limits_{k=1}^{n+1}\left[\binom{2n+2k-1}{2n}\binom{4n-2k+3}{2n}+2\binom{2n+2k-1}{2n}\binom{4n-2k+3}{2n+2}\right]G_{2n+2k}G_{4n-2k+4}.
\end{align*}
\end{theorem}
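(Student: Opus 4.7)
The plan is to derive Theorem \ref{theo6n4} by the same method used for Theorem \ref{theo6n2}, adapted from weight $6n+2$ to weight $6n+4$. First observe that both sides of the asserted identity are holomorphic modular forms of weight $6n+4$ for $\SL_2(\Z)$, so the statement lives in the finite-dimensional space $M_{6n+4}(\SL_2(\Z))$. It therefore suffices to exhibit the identity as an instance of the general bilinear relations among products of Eisenstein series classified in \cite{HST}: each polynomial $P(X,Y)\in\Q[X,Y]$ of degree $6n+2$ satisfying the Eichler two- and three-term period relations produces a linear relation among the products $G_{2a}G_{2b}$ (with $a+b=3n+2$, $a,b\geq 1$) and $G_{6n+4}$, valid a priori only modulo $S_{6n+4}(\SL_2(\Z))$, whose coefficients are read off from the coefficients of $P$.

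The central step is to exhibit an appropriate $P$ whose associated relation (i) is supported on the indices $a\in\{2n+2,2n+4,\ldots,4n+2\}$ appearing in the theorem, and (ii) has vanishing cuspidal component, so that the relation actually holds in $M_{6n+4}$ and not merely modulo cusp forms. Condition (i) forces divisibility by $X^{2n}Y^{2n}$; combined with the pair of binomial coefficients $\binom{2n+2k-1}{2n}\binom{4n-2k+3}{2n}$ and $2\binom{2n+2k-1}{2n}\binom{4n-2k+3}{2n+2}$ on the right-hand side, this points to a $P$ of the schematic shape $X^{2n}Y^{2n}\bigl((X+Y)^{2n+2}+\text{correction}\bigr)$, where the correction term accounts for the factor of $2$ and the second binomial summand. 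Checking the Eichler period relations for such a $P$ should then reduce to a routine application of the binomial theorem.

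Once the correct polynomial has been identified, the overall scalar $\binom{6n+3}{2n+2}+2\binom{6n+3}{2n}$ multiplying $G_{6n+4}$ on the left-hand side is pinned down by comparing constant terms of both sides. Since $G_{2a}G_{2b}$ has constant term $4\zeta(2a)\zeta(2b)$ while $G_{6n+4}$ has constant term $2\zeta(6n+4)$, the identity $\zeta(2m)=\frac{(-1)^{m+1}(2\pi)^{2m}B_{2m}}{2(2m)!}$ converts the comparison into a finite identity among Bernoulli numbers and binomial coefficients, which can be verified combinatorially (or by evaluating $P$ at a convenient specialization such as $(1,1)$). The main obstacle, I expect, will be step (ii): the vanishing of the cuspidal component is exactly the feature that distinguishes Theorems \ref{theo6n2}, \ref{theo6n}, and \ref{theo6n4} from generic (non-lacunary) relations among Eisenstein products, and verifying it will require exploiting the specific mod-$6$ residue class of the weight to ensure that the Eichler--Shimura image of the chosen $P$ vanishes in the period polynomial model of the cusp form space.
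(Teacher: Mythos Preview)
Your plan takes a different and considerably more circuitous route than the paper's. The paper does not build a period polynomial from scratch or invoke Eichler--Shimura; it simply specializes the relation \eqref{Relations} from \cite{HST} (Theorem~2.1 here) at $(r,s,t)=(2n+1,2n+1,2n+3)$, so that $k=6n+4$, and then simplifies exactly as in the proof of Theorem~\ref{theo6n}. Two of the three sums in \eqref{Relations} become equal after reversing the summation order, the binomial coefficients $\binom{i-1}{2n+2}$, $\binom{j-1}{2n}$, etc.\ vanish outside the desired range of indices, and the surviving $P_{i,j}$ with $i$ or $j$ odd are zero by definition. That is all there is to the right-hand side.

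The key point where your plan diverges is step~(ii). You identify ``vanishing of the cuspidal component'' as the main obstacle and suggest it is what singles out the lacunary relations. This is a misdiagnosis: the identities \eqref{Relations} are proved in \cite{HST} by partial fraction decomposition and are \emph{exact} in $M_k$, not merely valid modulo $S_k$. There is no cuspidal obstruction to dispose of, regardless of the residue class of $k$ modulo~$6$; the lacunarity arises purely from choosing $r,s,t$ all odd and nearly equal, which makes most of the binomial coefficients in \eqref{Relations} vanish. Likewise, the scalar $\binom{6n+3}{2n+2}+2\binom{6n+3}{2n}$ is not found by a separate Bernoulli-number computation via constant terms: it is just the sum of the coefficients of $G_{6n+4}$ already sitting in \eqref{Relations}, evaluated with the Hagen--Rothe identity \eqref{binom} exactly as in the proof of Theorem~\ref{theo6n} (and here it happens not to collapse to a single binomial coefficient). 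Your approach could in principle be pushed through, but it manufactures a difficulty that the paper's direct specialization of \eqref{Relations} never encounters.
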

As an immediate consequence, by considering only the constant terms in the Eisenstein series above, one recovers several of the lacunary recurrences for Bernoulli numbers which were systematically studied by Agoh and Dilcher \cite{AD1,AD2}.
As examples of the new recurrences proven, we offer the following special cases of Theorems \ref{theo6n} and \ref{theo6n4}.

\begin{align*}
11G_{10}&=5G_4G_6,\\
143G_{12}&=42G_4G_8+25G_6^2,\\
221G_{16}&=60G_6G_{10}+49G_8^2,\\
323G_{18}&=55G_6G_{12}+105G_8G_{10},\\
7429G_{22}&=1001G_8G_{14}+2706G_{10}G_{12},\\
2185G_{24}&=182G_8G_{16} + 546G_{10}G_{14} + 363G_{12}^2.
\end{align*}

\subsection*{Acknowledgements}
The authors would like to thank Ken Ono for suggesting this project as well as Kathrin Bringmann, Steven J. Miller, Dan Romik, and the anonymous referee for helpful comments.

\section{Linear relations among Eisenstein series}
All linear relations among products of two Eisenstein series $G_iG_j$ and the weight $i+j$ Eisenstein series have been classified in Theorem 1 of \cite{HST}, which we recall here. Their proof relies on partial fraction decompositions and extends previous work of Zagier \cite{Zagier} and Popa \cite{Popa}. To explain their results, define for integers $r,s\geq 2$ the function
\[P_{r,s}:=G_rG_s+\frac{\delta_{2,r}}{s}G_s'+\frac{\delta_{2,s}}{r}G_r',\]
where the $'$ denotes the renormalized derivative $\tfrac{1}{2\pi i}\tfrac{d}{d\tau}$, $\delta_{i,j}$ is the usual Kronecker delta symbol, and we set $G_r:=0$ if $r$ is odd. With this, we can state Theorem 1 in \cite{HST} as follows.
\begin{theorem}
Let $r,s,t\geq 1$ be integers such that $k:=r+s+t-1\geq 4$. Then we have
\begin{equation}\label{Relations}
\begin{aligned}
0=&\sum\limits_{i+j=k}\binom{i-1}{t-1}\binom{j-1}{s-1}(-1)^{i+r}(P_{i,j}-(-1)^jG_k)\\
+&\sum\limits_{j+h=k}\binom{j-1}{r-1}\binom{h-1}{t-1}(-1)^{j+s}(P_{h,j}-(-1)^hG_k)\\
+&\sum\limits_{h+i=k}\binom{h-1}{s-1}\binom{i-1}{r-1}(-1)^{h+t}(P_{h,i}-(-1)^iG_k).
\end{aligned}
\end{equation}
All linear relations among $G_k$ and $P_{2j,k-2j}$, $j=1,...,\lfloor\tfrac k4\rfloor$ are of the form \eqref{Relations}.
\end{theorem}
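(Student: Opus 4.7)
The plan is to prove the identity \eqref{Relations} by a three-term partial fraction identity summed over $\Z^2$, following and extending the strategies of Zagier \cite{Zagier} and Popa \cite{Popa}, and to establish the completeness assertion by a dimension count in the space of modular forms. The algebraic starting point is a partial fraction decomposition: for three nonzero variables $u, v, w$ satisfying $u + v + w = 0$ and integers $r, s, t \geq 1$ with $k = r+s+t-1$, one expands $(u^r v^s w^t)^{-1}$ by eliminating each of $u, v, w$ in turn via the linear relation and doing partial fractions in the remaining two variables. Summing the three resulting expansions produces exactly the combinatorial pattern appearing inside \eqref{Relations}, with the binomials $\binom{i-1}{t-1}\binom{j-1}{s-1}$ emerging as the coefficients in the partial fraction expansions.

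Next, I would specialize $u = m_1\tau + n_1$, $v = m_2\tau + n_2$, and $w = -(m_1+m_2)\tau - (n_1+n_2)$, and sum the algebraic identity over all pairs $(m_1,n_1),(m_2,n_2) \in \Z^2$ for which the three lattice points are nonzero. Pairs in which no two of the three points are proportional contribute a product of independent single sums producing the $G_iG_j$ terms inside $P_{i,j}$; pairs in which two of the three points become collinear contribute diagonal terms that collapse onto a single Eisenstein series $G_k$ and account for the $(-1)^jG_k$ corrections in \eqref{Relations}. The delicate point is the case when some index equals $2$: Hecke's summation trick is needed to handle the conditionally convergent sum defining $G_2$, and the correction arising from this regularization is precisely the renormalized derivative term $\delta_{2,r}G_s'/s$ built into the definition of $P_{r,s}$.

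For the uniqueness claim, I would use a dimension count. The $\lfloor k/4\rfloor + 1$ quantities $G_k$ and $P_{2j,k-2j}$ for $1\le j\le \lfloor k/4\rfloor$ are all holomorphic modular forms of weight $k$ on $\SLZ$---the derivative correction in $P_{2,k-2}$ is exactly what cancels the non-modular part of $G_2G_{k-2}$---and since $G_4$ and $G_6$ generate the graded ring of modular forms, these quantities span the full weight-$k$ space $M_k(\SLZ)$. Hence the space of linear relations among them has dimension $\lfloor k/4\rfloor + 1 - \dim M_k(\SLZ)$, and it remains only to verify that the relations \eqref{Relations}, indexed by triples $(r,s,t)$ with $r+s+t-1=k$, span exactly this space. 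This reduces to a rank computation on the matrix whose rows encode the binomial coefficients appearing in \eqref{Relations} as $(r,s,t)$ varies.

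The main obstacle will be the $G_2$-bookkeeping: both justifying the conditional convergence via Hecke's trick and verifying that the regularization produces exactly the derivative terms $\delta_{2,r}G_s'/s$ with the correct coefficients. This is the step where the argument genuinely extends the earlier holomorphic-product treatments of Zagier and Popa, whereas the dimension-counting portion of the uniqueness proof is, by comparison, conceptually routine once enough explicit relations are in hand.
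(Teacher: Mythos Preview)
The paper does not prove this theorem: it is quoted verbatim as Theorem~1 of \cite{HST} and used as a black box. The only information the paper gives about its proof is the sentence ``Their proof relies on partial fraction decompositions and extends previous work of Zagier \cite{Zagier} and Popa \cite{Popa}.'' Your proposed strategy---a three-term partial fraction expansion of $(u^rv^sw^t)^{-1}$ under the constraint $u+v+w=0$, summation over lattice points, Hecke regularization for the $G_2$ terms, and a dimension count for completeness---is exactly consistent with that description, so there is nothing in the present paper to compare against beyond confirming that you are reconstructing the cited argument rather than diverging from it.
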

In particular, the relations in \eqref{Romik} must arise as specializations of the previous theorem. Indeed, by setting $r=s=t=2n+1$ for $n\in\N$, hence $k=6n+2$, we obtain the following immediate consequence.
\begin{corollary}\label{cor}
The following identity holds for all $n\in\N$,
\begin{equation*}
\begin{split}
& \sum\limits_{k=1}^{2n+1}\binom{2n+k-1}{2n}\binom{4n-k+1}{2n} G_{6n+2} =
\\
&
\sum\limits_{k=1}^n \binom{2n+2k-1}{2n}\binom{4n-2k+1}{2n}G_{2n+2k}G_{4n-2k+2}
.
\end{split}
\end{equation*}
\end{corollary}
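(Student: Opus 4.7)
The plan is to specialize the preceding classification theorem at $r=s=t=2n+1$, which gives $k=r+s+t-1=6n+2$. Under this choice, and using both $P_{i,j}=P_{j,i}$ and the evenness of $k$, the three sums in \eqref{Relations} become equal after relabeling dummy indices, so the whole identity reduces to three copies of a single sum. Since $(-1)^{2n+1}=-1$, after dividing by $3$ it reads
$$0=\sum_{i+j=6n+2}\binom{i-1}{2n}\binom{j-1}{2n}(-1)^{i+1}\bigl(P_{i,j}-(-1)^jG_{6n+2}\bigr),$$
where the binomial coefficients restrict $i$ to $2n+1\leq i\leq 4n+1$; in particular $i,j\geq 3$, so $P_{i,j}=G_iG_j$ with no renormalized-derivative corrections.

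Next I split the sum by the parity of $i$, which equals that of $j$ since $i+j=6n+2$ is even. For odd $i$ both $G_i$ and $G_j$ vanish, while the signs $(-1)^{i+1}$ and $-(-1)^j$ are each equal to $+1$, leaving only the $G_{6n+2}$ contribution. For even $i$ both signs become $-1$; here the $G_iG_j$ term enters with a minus sign and $G_{6n+2}$ enters with a plus sign. Crucially, the coefficients of $G_{6n+2}$ coming from the two parity classes agree in both magnitude and sign, so they merge into one sum over all admissible $i$, while the products $G_iG_j$ survive only from the even-$i$ class.

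Finally, setting $i=2n+k$ with $k=1,\dots,2n+1$ presents the coefficient of $G_{6n+2}$ as $\sum_{k=1}^{2n+1}\binom{2n+k-1}{2n}\binom{4n-k+1}{2n}$, matching the left-hand side of Corollary~\ref{cor}. The surviving product terms correspond to even $k=2\ell$ with $\ell=1,\dots,n$, and reindexing $\ell\mapsto k$ recovers the right-hand side. The only real obstacle is the sign bookkeeping in the parity split, but this is controlled by the single observation that $r=s=t$ is odd while $k$ is even, so contributions in each parity class reinforce one another rather than cancel.
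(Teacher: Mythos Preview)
Your argument is correct and follows the same route as the paper: specialize \eqref{Relations} at $r=s=t=2n+1$, observe that the three sums coincide, restrict to $2n+1\le i\le 4n+1$ via the binomial coefficients, and eliminate the odd-index terms where $P_{i,j}$ vanishes. The only cosmetic difference is ordering: the paper shifts the index first and then kills the odd-$k$ terms using $P_{r,s}=0$ for $r$ odd, whereas you carry out the parity split in the variable $i$ before shifting; your remark that the evenness of $k$ is needed to identify the three sums is not actually required (the symmetry $P_{i,j}=P_{j,i}$ together with $r=s=t$ already suffices), but it does no harm.
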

\begin{proof}
By our choice of parameters, all the sums on \eqref{Relations} are equal. Furthermore, the product $\binom{i-1}{2n}\binom{6n-i+1}{2n}$ occuring in the sum is zero unless we have $2n+1\leq i\leq 4n+1$. Now we make the index shift $i\mapsto k+2n$ and bring the terms involving only $G_k$ to the left-hand side to obtain
\begin{equation*}
\begin{split}
& \sum\limits_{k=1}^{2n+1}\binom{2n+k-1}{2n}\binom{4n-k+1}{2n} G_{6n+2} =
\\
&
\sum\limits_{k=1}^{2n+1}\binom{2n+k-1}{2n}\binom{4n-k+1}{2n} (-1)^{k}P_{2n+k,4n-k+2}
.
\end{split}
\end{equation*}
Since by definition $P_{r,s}$ is identically zero if one of $r$ and $s$ is odd, we obtain the desired result.
\end{proof}
Theorem 1.1 is a consequence of the following identity for binomial coefficients.
\begin{lemma}
Let $n\geq 1$ and define 
\[B(n):=\sum\limits_{j=1}^{2n+1}\binom{2n+j-1}{2n}\binom{4n-j+1}{2n}.\]
Then we have that
\[\frac 1{B(n)}\binom{2n+2k-1}{2n}\binom{4n-2k+1}{2n}=\frac{1}{6n+1}\cdot\frac{(4n+1)!}{(2n)!^2}\frac{\binom{2n}{2k-1}}{\binom{6n}{2n+2k-1}}\] 
for all $1\leq k\leq n$.
\end{lemma}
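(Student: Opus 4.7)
The plan is to first compute the sum $B(n)$ in closed form and then observe that the remaining identity is a routine factorial manipulation. After the index shift $j\mapsto j-1$, one has
\[
B(n)=\sum_{i=0}^{2n}\binom{2n+i}{2n}\binom{4n-i}{2n},
\]
which is of the standard Chu--Vandermonde shape $\sum_{i=0}^{N}\binom{a+i}{a}\binom{b+N-i}{b}=\binom{a+b+N+1}{N}$. Applying this with $a=b=N=2n$ yields $B(n)=\binom{6n+1}{2n}$. Equivalently, one can see the same evaluation by extracting the coefficient of $x^{2n}$ in $(1-x)^{-(2n+1)}\cdot(1-x)^{-(2n+1)}=(1-x)^{-(4n+2)}$.

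Once $B(n)$ is known, the rest is bookkeeping. Writing every binomial coefficient in terms of factorials, the left-hand side becomes
\[
\frac{(2n)!\,(4n+1)!}{(6n+1)!}\cdot\frac{(2n+2k-1)!}{(2n)!\,(2k-1)!}\cdot\frac{(4n-2k+1)!}{(2n)!\,(2n-2k+1)!}
=\frac{(4n+1)!\,(2n+2k-1)!\,(4n-2k+1)!}{(6n+1)!\,(2n)!\,(2k-1)!\,(2n-2k+1)!},
\]
while expanding the right-hand side gives the same expression.

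The only substantive step is the evaluation of $B(n)$; everything else is mechanical. I therefore expect no real obstacle beyond recognizing the Chu--Vandermonde form, and this yields \Cref{theo6n2} immediately by combining the lemma with \Cref{cor}.
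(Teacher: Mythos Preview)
Your proof is correct and follows essentially the same route as the paper: reduce the lemma to the evaluation $B(n)=\binom{6n+1}{2n}$, verify that via a standard convolution identity, and then check the remaining factorial equality. The only cosmetic difference is that the paper quotes the Hagen--Rothe identity with $b=1$ (which collapses to Chu--Vandermonde) whereas you invoke Chu--Vandermonde directly or, equivalently, the generating-function argument.
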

\begin{proof}
After simplifying the claim by canceling out terms in the binomial expressions, we find that the claim is equivalent to the identity $
B(n)=\binom{6n+1}{2n}.$
We use the following identity due to Hagen and Rothe (see \cite{Chu}), which is valid for $a,b,c,k\in\N$ whenever the denominator doesn't vanish:
\begin{equation}\label{binom}
\sum_{j=0}^k\frac{a}{(a+bj)}\binom{a+bj}{j}\binom{c-bj}{k-j}=\binom{a+c}{k}.
\end{equation}
Setting $k=2n$, $a=2n+1, b=1, c=4n$, we find
\[
\binom{6n+1}{2n}
=
\sum_{j=0}^{2n}\frac{(2n+1)}{(2n+j+1)}\binom{2n+1+j}{j}\binom{4n-j}{2n-j}
=
\sum_{j=0}^{2n}
\binom{2n+j}{2n}\binom{4n-j}{2n},
\]
which is equivalent to the claim after a shift in $j$. 
\end{proof}

\noindent\emph{Remark:} The identity in Lemma 2.2 of \cite{Romik}, shown basically by the celebrated Wilf-Zeilberger method, is equivalent to the one shown here with more elementary methods.
\medskip

In the above proof, we note that the Eisenstein series of weight $6n+2$ are particularly special as these correspond to a choice of parameters in Theorem 2.1 where $r=s=t$. To illustrate the power of Theorem 2.1, here we offer similar recurrences for the other residue classes modulo $6$. In order to obtain recurrences which are ``as lacunary as possible'', one easily sees from \eqref{Relations}, that for a given weight $k$, one has to choose $r,s,t$ all odd and as large as possible, since then most of the binomial coefficients in the sums will vanish. 
\begin{proof}[Proof of Theorem \ref{theo6n}]
Choosing $(r,s,t)=(2n-1,2n+1,2n+1)$, and hence $k=6n$ (for $n\geq 2$), in \eqref{Relations}, we obtain
\begin{equation*}
\begin{aligned}
0=&\sum\limits_{i=1}^{6n}\binom{i-1}{2n}\binom{6n-i-1}{2n}(-1)^{i}(P_{i,6n-i}-(-1)^iG_{6n})\\
+&\sum\limits_{j=1}^{6n}\binom{j-1}{2n-2}\binom{6n-j-1}{2n}(-1)^{j}(P_{j,6n-j}-(-1)^jG_{6n})\\
+&\sum\limits_{h=1}^{6n}\binom{h-1}{2n}\binom{6n-h-1}{2n-2}(-1)^{h}(P_{h,6n-h}-(-1)^hG_{6n}).
\end{aligned}
\end{equation*}
By reversing the order of summation in the third of these sums, we see that it equals the second one. If we now omit the terms where the binomial coefficients vanish (i.e. those where $i\leq 2n$ or $i\geq 4n$ resp. $j\leq 2n-2$ or $j\geq 4n$) and shift the summation as in the proof of Corollary \ref{cor}, we obtain the identity
\begin{equation*}\label{id6n1}
\begin{aligned}
&\left\{\sum\limits_{j=1}^{2n-1}\binom{2n+j-1}{2n}\binom{4n-j-1}{2n} + 2\sum\limits_{j=1}^{2n+1} \binom{2n+j-3}{2n-2}\binom{4n-j+1}{2n}\right\}G_{6n}\\
=&\sum\limits_{k=1}^{n-1}\binom{2n+2k-1}{2n}\binom{4n-2k-1}{2n}G_{2n+2k}G_{4n-2k}\\
&\hspace{2cm} +2\sum\limits_{k=1}^{n} \binom{2n+2k-1}{2n}\binom{4n-2k-1}{2n-2}G_{2n+2k}G_{4n-2k}.
\end{aligned}
\end{equation*}
 
The sums on the left hand side can be simplified using \eqref{binom} and by straightforward manipulation of binomial coefficients:

For the first sum we can choose $a=2n+1,\:b=1,\:c=4n-2,\:k=2n-2$, so that, after an index shift, we have by \eqref{binom}
\[\sum\limits_{j=1}^{2n-1}\binom{2n+j-1}{2n}\binom{4n-j-1}{2n}=\binom{6n-1}{2n-2}.\]
For the second sum we choose $a=2n-1,\:b=1,\:c=4n,\:k=2n$, which gives the identity
\[\sum\limits_{j=1}^{2n+1} \binom{2n+j-3}{2n-2}\binom{4n-j+1}{2n}=\binom{6n-1}{2n}.\]
Now we compute directly
\begin{align*}
\binom{6n-1}{2n-2}+2\binom{6n-1}{2n}&=\frac{(6n-1)!(2n(2n-1)+2\cdot 4n(4n+1))}{(2n)!(4n+1)!}\\
&=\frac{(6n-1)!(36n^2+6n)}{(2n)!(4n+1)!}=\binom{6n+1}{2n}.
\end{align*}
\end{proof}

In order to prove \Cref{theo6n4}, we choose $(r,s,t)=(2n+1,2n+1,2n+3)$ ($n\geq 1$) in \eqref{Relations} and proceed in the same manner as in the proof of \Cref{theo6n}. Since it would be almost literally the same proof, we omit it here. We just note that in this situation, the factor in front of $G_{6n+4}$ cannot be simplified to a single binomial coefficient.

\section*{Competing interests}
The authors declare that they have no competing interests in the present manuscript.

\end{document}